\newtheorem{theorem}{Theorem}
\newtheorem{lemma}{Lemma}
\begin{document}
\baselineskip=17pt

%\AddToShipoutPictureBG*{
%  \AtPageUpperLeft{
%  \hspace{\paperwidth}
%   \raisebox{-4\baselineskip}{
%   \makebox[-180pt][r]{
%     This is the author's version of the paper. The final publication has appeared in}
%
%     \raisebox{-1.2\baselineskip}{
%      \makebox[-188pt][r]{
%     Georgian Math. J., \textbf{30}, 3, (2023), 333 -- 348.}
%}}}}

\title{\bf Square-free values of $\mathbf{n^2+n+1}$ }
\author{\bf S. I. Dimitrov}

\date{}
\maketitle

\begin{abstract}

In this paper we show that there exist infinitely many square-free numbers of the form $n^2+n+1$.
We achieve this by deriving an asymptotic formula by improving the reminder term from previous results.
\\
\quad\\
\textbf{Keywords}: Square-free numbers, Asymptotic formula, Kloosterman sum. \\
\quad\\
{\bf  2020 Math.\ Subject Classification}:  11L05 $\cdot$ 11N25 $\cdot$  11N37
\end{abstract}

\section{Notations}
\indent

Let $X$ be a sufficiently large positive number.
By $\varepsilon$ we denote an arbitrary small positive number, not necessarily the same in different occurrences.
As usual $\mu(n)$ is M\"{o}bius' function and $\tau(n)$ denotes the number of positive divisors of $n$.
Further $[t]$ and $\{t\}$ denote the integer part, respectively, the fractional part of $t$.
We shall use the convention that a congruence, $m\equiv n\,\pmod {d}$ will be written as $m\equiv n\,(d)$.
As usual $(m,n)$ is the greatest common divisor of $m$ and $n$.
The letter $p$  will always denote prime number.
Moreover $e(t)$=exp($2\pi it$) and $\psi(t)=\{t\}-1/2$.
For $x, y \in\mathbb{R}$ we write $x\equiv y\,(1)$ when $x-y\in\mathbb{Z}$.
For any $n$ and $q$ such that $(n, q)=1$ we denote by $\overline{n}_q$
the inverse of $n$ modulo $q$.
By $K(r,h)$ we shall denote the incomplete Kloosterman sum
\begin{equation}\label{Kloosterman}
K(r,h)=\sum\limits_{\alpha\leq x\leq \beta\atop{(x, r)=1}}e\left(\frac{h\overline{x}_{|r|}}{r}\right)\,,
\end{equation}
where
\begin{equation*}
h, r\in\mathbb{Z}, \quad hr\neq 0, \quad 0<\beta-\alpha\leq2|r|.
\end{equation*}
We also define
\begin{equation}\label{lambdaq1q2}
\lambda(q)=\sum\limits_{1\leq n\leq q\atop{n^2+n+1\equiv 0\,(q)}}1\,,
\end{equation}
\begin{equation}\label{GammaX}
\Gamma(X)=\sum\limits_{1\leq n\leq X}\mu^2(n^2+n+1)\,.
\end{equation}

\section{Introduction and statement of the result}
\indent

We have many reasons to presuppose that there exist infinitely many prime numbers of the form $n^2+n+1$.
Unfortunately, this hypothesis is beyond the scope of modern number theory.
Radically new ideas are needed to be able to attack this problem.
For now, we need to focus on the study of square-free numbers of the shape $n^2+n+1$.
The first result in this direction was obtained in 1933 and belongs to Ricci \cite{Ricci}.
Consider the irreducible polynomial $f(x)\in \mathbb{Z}[x]$ of degree $2$.
Assume that for every prime $p$ there is at least one integer $n_p$ for which $p^2\nmid f(n_p)$.
Ricci proved that the asymptotic formula
\begin{equation}\label{Ricci}
N_f(X)= C_f X + \Delta
\end{equation}
holds.
Here
\begin{equation*}
N_f(X)=\#\{n\leq X: \,f(n) \mbox{ is square-free}\}\,,
\end{equation*}
\begin{equation*}
C_f=\prod\limits_{p}\left(1-\frac{\rho_f(p^2)}{p^2}\right)\,,
\end{equation*}
\begin{equation*}
\rho_f(d) = \#\{a \,\emph{(mod }\, d) : d \,|\, f(a)\}
\end{equation*}
and
\begin{equation*}
\Delta=o(X)\,.
\end{equation*}
Subsequently the reminder term in \eqref{Ricci} was sharpen by Nair \cite{Nair} to
\begin{equation*}
\Delta=\mathcal{O}\left(\frac{X}{\log X}\right)\,.
\end{equation*}
In this paper we improve the reminder term of Nair about the distribution of square-free  polynomials of the shape $n^2+n+1$.
More precisely we establish the following theorem.
\begin{theorem}\label{Theorem1}  For the sum $\Gamma(X)$ defined by \eqref{GammaX} the asymptotic formula
\begin{equation}\label{asymptoticformula1}
\Gamma(X)=\sigma X+\mathcal{O}\left(X^{\frac{4}{5}+\varepsilon}\right)
\end{equation}
holds. Here
\begin{equation}\label{sigmaproduct}
\sigma =\prod_p\left(1-\frac{\lambda(p^2)}{p^2}\right)\,.
\end{equation}
\end{theorem}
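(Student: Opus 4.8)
The plan is to run the classical square-free sieve and then push the error term down to $X^{4/5+\varepsilon}$ by converting the resulting root-counting error into incomplete Kloosterman sums. I start from the identity $\mu^2(m)=\sum_{d^2\mid m}\mu(d)$ applied to $m=n^2+n+1$, which after interchanging the order of summation gives
\begin{equation*}
\Gamma(X)=\sum_{d\le\sqrt{X^2+X+1}}\mu(d)A_d(X),\qquad A_d(X)=\#\{n\le X:\ n^2+n+1\equiv0\,(d^2)\}.
\end{equation*}
Grouping the $n\le X$ according to the residue $a\pmod{d^2}$ they occupy among the $\lambda(d^2)$ roots of $n^2+n+1\equiv0\,(d^2)$, and writing the number of integers of an interval in a fixed congruence class through $\psi$, I obtain $A_d(X)=\lambda(d^2)X/d^2+R_d(X)$ with
\begin{equation*}
R_d(X)=\sum_{a\,(d^2)\atop a^2+a+1\equiv0\,(d^2)}\Big(\psi\big(\tfrac{-a}{d^2}\big)-\psi\big(\tfrac{X-a}{d^2}\big)\Big).
\end{equation*}

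Next I isolate the main term. Since $\lambda$ is multiplicative and $\mu(d)$ restricts the sum to square-free $d$, the series $\sum_d\mu(d)\lambda(d^2)/d^2$ factors into the Euler product in \eqref{sigmaproduct}; using $\lambda(p^2)\le2$ it converges absolutely, and completing the truncated sum to the full series costs only $O(X^{\varepsilon})$. Hence the asymptotic formula \eqref{asymptoticformula1} reduces to the bound
\begin{equation*}
\sum_{d\le\sqrt{X^2+X+1}}\mu(d)R_d(X)\ll X^{4/5+\varepsilon}.
\end{equation*}

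To extract cancellation I expand each $\psi$ into its truncated Fourier series up to some height $H$, so that $R_d(X)$ becomes a linear combination of the complete root sums $\sum_{a^2+a+1\equiv0\,(d^2)}e(ha/d^2)$ with $1\le|h|\le H$, plus a truncation error controlled by the standard $\min\big(1,(H\|\cdot\|)^{-1}\big)$ estimate. I then parametrise the roots modulo $d^2$ by Hensel lifting from the roots $a_0$ modulo the square-free part of $d$ (note that $3\nmid d$ may be assumed, since $\lambda(9)=0$, so $2a_0+1$ is invertible): the lift has the shape $a\equiv a_0-d\,t\,\overline{(2a_0+1)}_d\,(d^2)$ with $t=(a_0^2+a_0+1)/d$, whence $e(ha/d^2)$ factors into a piece depending only on $a_0\pmod d$ times the crucial factor $e\big({-}h t\,\overline{(2a_0+1)}_d/d\big)$. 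Summing this factor over $d$ in a dyadic block produces exactly the incomplete Kloosterman sums $K(r,h)$ of \eqref{Kloosterman}, to which I apply Weil's bound $|K(r,h)|\ll|r|^{1/2+\varepsilon}(h,r)^{1/2}$. Finally I balance the two opposing errors, the Kloosterman contribution growing with $H$ and with the length of the $d$-range while the Fourier-truncation tail decreases in $H$, optimising $H$ and, where needed, splitting the $d$-summation at a suitable threshold, so that the total comes out as $X^{4/5+\varepsilon}$.

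The hard part is precisely this Kloosterman step. The complete root sum modulo $d^2$ exhibits no cancellation on its own, being of size $\asymp d$, so every saving must come from averaging over $d$; this forces a careful Hensel parametrisation to expose the $\overline{(2a_0+1)}_d/d$ structure, a genuinely uniform reduction to the sums $K(r,h)$ with honest control of the gcd factors $(h,r)$, and a matching estimate for the Fourier-truncation error. Making all of this hold uniformly over the entire range $d\le\sqrt{X^2+X+1}$, rather than only for small moduli, is exactly what defeats the $X^{1+\varepsilon}$ barrier implicit in counting one solution per residue class, and is what yields the improvement over Nair's $X/\log X$.
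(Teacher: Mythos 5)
Your setup (the square-free sieve, the truncated Fourier expansion of $\psi$, and the aim of reaching incomplete Kloosterman sums estimated by Weil's bound) matches the paper's skeleton, but the decisive step is missing. After the Hensel lift you arrive at factors $e\bigl(-ht\,\overline{(2a_0+1)}_d/d\bigr)$ and assert that ``summing this factor over $d$ in a dyadic block produces exactly the incomplete Kloosterman sums $K(r,h)$.'' This is false as stated: in the sum \eqref{Kloosterman} the variable being inverted runs over an \emph{interval} while the modulus is held fixed, whereas in your sum the modulus $d$ itself varies, and for each fixed $d$ the roots $a_0$ form a sparse set of size $\lambda(d)\ll d^{\varepsilon}$ (note $(2a_0+1)^2\equiv-3\;(d)$, so you are in effect summing over square roots of $-3$ to varying moduli). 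No Kloosterman structure arises from that configuration without a further idea. The paper's central device, for which you offer no substitute, is Lemma \ref{Surjection}: a bijection between the roots of $z^2+z+1\equiv0\;(d)$ and the primitive representations $d=u^2+uv+v^2$, which re-indexes the pairs (modulus, root) by lattice points $(u,v)$ with $D\le u^2+uv+v^2<2D$; combined with the reciprocity of Lemma \ref{Wellknown}, giving
\begin{equation*}
\frac{n_{u,v}}{d}\equiv \frac{u}{v(u^2+uv+v^2)}-\frac{\overline{u}_{|v|}}{v}\;\;(1)\,,
\end{equation*}
this makes $u$ run over an interval for each fixed $v$, and only then does Weil's estimate apply. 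Without this lemma (or an equivalent mechanism, such as the spectral theory of roots of quadratic congruences in the style of Duke--Friedlander--Iwaniec) your argument produces no cancellation in the $d$-average, and each dyadic block contributes the trivial $D^{1+\varepsilon}$, which is useless for $D>X^{4/5}$.

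Two further points. First, your Hensel factorization still leaves the factor $e(ha_0/d^2)$ with denominator $d^2$, an obstruction the paper avoids entirely: using that the roots pair up as $n\leftrightarrow d^2-n-1$ and that $d^2>X$ in the critical range, the paper replaces $\sum_{n}\psi\bigl((X-n)/d^2\bigr)$ over roots modulo $d^2$ by $\sum_{n}\psi\bigl((\sqrt{X}-n)/d\bigr)$ over roots modulo $d$ (Hensel enters only to equate the two root counts), so that every denominator is $d$ before the Kloosterman step. Second, your remark that the complete root sum modulo $d^2$ is ``of size $\asymp d$'' is incorrect---it has $\lambda(d^2)\ll d^{\varepsilon}$ terms---although your conclusion that all saving must come from averaging over $d$ is the right sentiment.
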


A bijection correspondence between the number of representations of number by binary quadratic
form and the incongruent solutions of quadratic congruence allows us to achieve effective control
over the distribution of square-free numbers of the form $n^2+n+1$.
Results related to power-free values of polynomials can be found in
\cite{Browning}, \cite{Erdos}, \cite{Estermann}, \cite{Heath-Brown2}, \cite{Heath-Brown3},
\cite{Heath-Brown4}, \cite{Hooley}, \cite{Reuss2}.

\section{Lemmas}
\indent

This lemma gives us important expansions.
\begin{lemma}\label{expansion}
For any $M\geq2$, we have
\begin{equation*}
\psi(t)=-\sum\limits_{1\leq|m|\leq M}\frac{e(mt)}{2\pi i m}
+\mathcal{O}\big(f_M(t)\big)\,,
\end{equation*}
where $f_M(t)$ is a positive function of $t$ which is infinitely many
times differentiable and periodic with period 1.
It can be expanded into the Fourier series
\begin{equation*}
f_M(t)=\sum\limits_{m=-\infty}^{+\infty}b_{M}(m)e(m t)\,,
\end{equation*}
with coefficients $b_{M}(m)$ such that
\begin{equation*}
b_{M}(m)\ll\frac{\log M}{M}\quad \mbox{for all}\quad m
\end{equation*}
and
\begin{equation*}
\sum\limits_{|m|>M^{1+\varepsilon}}|b_{M}(m)|\ll M^{-A}\,.
\end{equation*}
Here $A > 0$ is arbitrarily large and the constant in the $\ll$ - symbol depends on $A$ and $\varepsilon$.
\end{lemma}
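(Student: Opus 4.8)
The plan is to realize this as a Vinogradov--Vaaler type smoothing of the sawtooth. First I would record the exact Fourier series $\psi(t)=-\sum_{m\geq1}\frac{\sin 2\pi mt}{\pi m}$, so that the truncated main term in the statement is precisely the symmetric partial sum $S_M(t)=-\sum_{1\leq|m|\leq M}\frac{e(mt)}{2\pi i m}=-\sum_{m=1}^{M}\frac{\sin 2\pi mt}{\pi m}$, and the quantity to be controlled is the tail $\psi(t)-S_M(t)=-\sum_{m>M}\frac{\sin 2\pi mt}{\pi m}$. Writing $\|t\|$ for the distance from $t$ to the nearest integer, Abel summation against the bound $\big|\sum_{M<m\leq N}e(mt)\big|\ll \|t\|^{-1}$, together with the uniform boundedness of the partial sums of $\sum \sin(2\pi mt)/m$, yields the pointwise estimate $|\psi(t)-S_M(t)|\ll \min\big(1,(M\|t\|)^{-1}\big)$. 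Thus it suffices to produce a single nonnegative, $1$-periodic, $C^\infty$ function $f_M$ that dominates $\min\big(1,(M\|t\|)^{-1}\big)$ and satisfies the two coefficient bounds.

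Next I would construct $f_M$ directly on a single period. Fix once and for all an even, nonnegative $\rho\in C^\infty(\mathbb{R})$ supported in $[-1,1]$ with $\int\rho=1$, and on $[-1/2,1/2]$ let $h_M$ be the nonnegative even $C^\infty$ function obtained by smoothing the corner of $t\mapsto\min\big(1,(M\|t\|)^{-1}\big)$ at $\|t\|\asymp 1/M$ by convolution with $\rho$ at scale $1/M$, arranged so that $h_M(t)\gg\min\big(1,(M\|t\|)^{-1}\big)$ and so that all derivatives of $h_M$ vanish at the endpoints $t=\pm1/2$. Let $f_M$ be the $1$-periodic extension of $h_M$. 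Since $h_M$ is even and flat to infinite order at the endpoints, $f_M$ is nonnegative, $C^\infty$ and $1$-periodic, and by construction it dominates the error of the first step, so the displayed identity with the $\mathcal{O}(f_M)$ term holds.

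The coefficient bounds would then be verified as follows. Because $f_M\geq0$, every Fourier coefficient obeys $|b_M(m)|=\big|\int_0^1 f_M(t)e(-mt)\,dt\big|\leq \int_0^1 f_M(t)\,dt=b_M(0)$, so the uniform estimate $b_M(m)\ll \log M/M$ reduces to the single computation $\int_0^1 f_M \asymp \log M/M$; here the logarithm is exactly the cost of integrating the $1/(M\|t\|)$ tail from $\|t\|\asymp 1/M$ out to $\|t\|\asymp 1/2$. For the rapid decay I would integrate by parts $k$ times, obtaining $|b_M(m)|\leq (2\pi|m|)^{-k}\|f_M^{(k)}\|_{L^1[0,1]}$, and bound $\|f_M^{(k)}\|_{1}\ll_k M^{k-1}\log M$ from the scale-$1/M$ construction (each derivative costs a factor $M$, while the set on which $f_M^{(k)}$ is large has measure $\asymp 1/M$). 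Summing $|b_M(m)|\ll_k M^{k-1}\log M\,|m|^{-k}$ over $|m|>M^{1+\varepsilon}$ gives $\ll_k (\log M)\,M^{-\varepsilon(k-1)}$, which is $\ll M^{-A}$ once $k$ is chosen large in terms of $A$ and $\varepsilon$.

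The main obstacle is the simultaneous balancing in the construction of $f_M$: the function must dominate the slowly decaying tail $\min(1,(M\|t\|)^{-1})$ pointwise, yet carry total mass only $O(\log M/M)$ and admit clean derivative bounds $\|f_M^{(k)}\|_1\ll_k M^{k-1}\log M$ for every $k$. Reconciling these three requirements --- smoothing the corner at $\|t\|\asymp 1/M$ and forcing all derivatives to vanish at the period endpoints $t=\pm1/2$ (so that the periodic extension is genuinely $C^\infty$) without inflating either the mass or the derivative norms --- is the technical heart of the argument; the remaining estimates are routine Fourier analysis.
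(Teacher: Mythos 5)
The paper does not actually prove this lemma: its entire ``proof'' is the citation to Theorem 1 of \cite{Tolev}, so there is no argument in the paper to compare against line by line. Your proposal is a correct, self-contained reconstruction of the standard mollification proof that lies behind such statements, and all the key estimates check out. Writing $\|t\|$ for the distance from $t$ to the nearest integer, as you do: (i) the truncation bound $\bigl|\psi(t)+\sum_{1\leq|m|\leq M}e(mt)/(2\pi i m)\bigr|\ll\min\bigl(1,(M\|t\|)^{-1}\bigr)$ is right, via Abel summation for $\|t\|>1/M$ and via $|\sin 2\pi m t|\leq 2\pi m\|t\|$ for $\|t\|\leq 1/M$ (the latter gives boundedness directly, so you do not even need the general uniform boundedness of the partial sums); (ii) once a smooth $1$-periodic majorant of $\min\bigl(1,(M\|t\|)^{-1}\bigr)$ with mass $\asymp (\log M)/M$ and $\|f_M^{(k)}\|_{L^1}\ll_k M^{k-1}\log M$ exists, positivity gives $|b_M(m)|\leq b_M(0)\ll (\log M)/M$ and $k$-fold integration by parts plus summation over $|m|>M^{1+\varepsilon}$ gives $\ll (\log M) M^{-\varepsilon(k-1)}\ll M^{-A}$ for $k$ large in terms of $A$ and $\varepsilon$, exactly as you say. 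One simplification would tighten the ``technical heart'' you worry about: work on the circle $\mathbb{R}/\mathbb{Z}$ and simply set $f_M=C\,g*\rho_{1/M}$, where $g(t)=\min\bigl(1,(M\|t\|)^{-1}\bigr)$ and $\rho_{1/M}(t)=M\rho(Mt)$. Periodicity and smoothness are then automatic (no endpoint-flatness arrangements at $t=\pm 1/2$ are needed), the pointwise domination $g*\rho_{1/M}\gg g$ follows because $g$ varies by at most a bounded factor over intervals of length $2/M$ and equals $1$ near the integers, the mass is preserved by convolution, and Young's inequality gives $\|f_M^{(k)}\|_{L^1}\leq\|g\|_{L^1}\|\rho_{1/M}^{(k)}\|_{L^1}\ll_k M^{k-1}\log M$ in one line. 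In short: your route buys self-containedness where the paper merely outsources; since Tolev's own proof proceeds by a smoothing of the same kind, what you have written is essentially a proof of the cited result rather than a divergence from it.
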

\begin{proof}
See (\cite{Tolev}, Theorem 1).
\end{proof}
The next lemma we need is well-known.
\begin{lemma}\label{Wellknown}
Let $A, B\in\mathbb{Z}\setminus \{0\}$ and $(A, B)=1$. Then
\begin{equation*}
\frac{\overline{A}_{|B|}}{B}+\frac{\overline{B}_{|A|}}{A}\equiv \frac{1}{AB}\,\,(\,1\,).
\end{equation*}
\end{lemma}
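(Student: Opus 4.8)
The plan is to clear denominators and reduce the claimed congruence to a single divisibility statement, which I can then verify via the Chinese Remainder Theorem. Writing the two fractions over the common denominator $AB$, the asserted relation
\begin{equation*}
\frac{\overline{A}_{|B|}}{B}+\frac{\overline{B}_{|A|}}{A}\equiv\frac{1}{AB}\,(\,1\,)
\end{equation*}
is equivalent to
\begin{equation*}
\frac{A\,\overline{A}_{|B|}+B\,\overline{B}_{|A|}-1}{AB}\in\mathbb{Z}\,,
\end{equation*}
that is, to the divisibility $AB\mid A\,\overline{A}_{|B|}+B\,\overline{B}_{|A|}-1$. Since $(A,B)=1$, it suffices to prove separately that this numerator is divisible by $A$ and by $B$, and then combine.

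First I would check divisibility by $A$. The term $A\,\overline{A}_{|B|}$ vanishes modulo $A$, while by the defining property of the inverse $\overline{B}_{|A|}$ we have $B\,\overline{B}_{|A|}\equiv1\,(A)$; here I use that reduction modulo $|A|$ and modulo $A$ define the same congruence. Hence the numerator is $\equiv 0+1-1\equiv0\,(A)$. Symmetrically, modulo $B$ the term $B\,\overline{B}_{|A|}$ vanishes and $A\,\overline{A}_{|B|}\equiv1\,(B)$ by the definition of $\overline{A}_{|B|}$, so the numerator is $\equiv0\,(B)$.

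Combining these two congruences and using $(A,B)=1$ yields $AB\mid A\,\overline{A}_{|B|}+B\,\overline{B}_{|A|}-1$, which is exactly the desired identity. The only point requiring any care is the interplay between the signs of $A,B$ and the absolute values appearing in the subscripts of the inverses, but this causes no real difficulty: each defining relation $A\,\overline{A}_{|B|}\equiv1\,(|B|)$ and $B\,\overline{B}_{|A|}\equiv1\,(|A|)$ is equivalent to the corresponding congruence modulo $B$, respectively modulo $A$. I therefore anticipate no genuine obstacle; this is an elementary reciprocity identity, and the entire argument is a short computation once the fractions are put over $AB$ and the Chinese Remainder Theorem is invoked.
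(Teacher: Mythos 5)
Your proof is correct and complete: clearing denominators to reduce the claim to $AB\mid A\,\overline{A}_{|B|}+B\,\overline{B}_{|A|}-1$, then checking divisibility by $A$ and by $B$ separately (using $B\,\overline{B}_{|A|}\equiv1\,(A)$ and $A\,\overline{A}_{|B|}\equiv1\,(B)$, with the observation that congruence modulo $|A|$ and modulo $A$ coincide) and combining via coprimality is exactly the canonical argument for this reciprocity identity. The paper itself supplies no proof at all — it merely calls the lemma well-known — so your write-up fills that gap with the standard proof; the only cosmetic remark is that the final combination step needs nothing as strong as the Chinese Remainder Theorem, just the elementary fact that coprime divisors of an integer have their product dividing it.
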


\begin{lemma}\label{Weilsestimate}
For the sum denoted by \eqref{Kloosterman} the estimate
\begin{equation*}
K(r,h)\ll|r|^{\frac{1}{2}+\varepsilon}\,(r,h)^{\frac{1}{2}}
\end{equation*}
holds.
\end{lemma}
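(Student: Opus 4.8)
The plan is to reduce the incomplete sum $K(r,h)$ to complete Kloosterman sums by the standard completion technique and then to invoke Weil's bound for the complete sums to a composite modulus. First I would reduce to the case $r=q>0$: since $e(h\overline{x}_{|r|}/r)=e(\pm h\,\overline{x}_q/q)$ with $q=|r|$, replacing $h$ by $-h$ if necessary changes neither $|r|$ nor $(r,h)$, so no generality is lost. Next, because the interval $[\alpha,\beta]$ has length at most $2q$, I would split it into at most two subintervals each of length at most $q$; it then suffices to bound the contribution of a single interval $I$ of length $\le q$, over which each residue class modulo $q$ is represented at most once.

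For such an interval I would expand the counting function $c(n)=\#\{x\in I\cap\mathbb{Z}:\,x\equiv n\,(q)\}$, which is periodic of period $q$, into additive characters modulo $q$:
\[
c(n)=\sum_{|m|\le q/2} a_m\, e\!\left(\frac{mn}{q}\right),\qquad a_m=\frac1q\sum_{x\in I\cap\mathbb{Z}} e\!\left(-\frac{mx}{q}\right).
\]
Since $|I|\le q$ the coefficient of the class $n=0$ is bounded, $|a_0|\ll 1$, while summing the geometric series gives $|a_m|\ll 1/|m|$ for $m\neq0$. Because the summand of $K(r,h)$ depends only on $x$ modulo $q$, substituting $c(n)$ and interchanging the order of summation turns the incomplete sum into a weighted sum of complete Kloosterman sums,
\[
K(r,h)=\sum_{|m|\le q/2} a_m\, S(m,h;q),\qquad S(a,b;q)=\sum_{x\bmod q \atop {(x,q)=1}} e\!\left(\frac{ax+b\overline{x}_q}{q}\right).
\]

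The key analytic input is Weil's estimate for the complete Kloosterman sum to a composite modulus,
\[
|S(a,b;q)|\le \tau(q)\,(a,b,q)^{1/2}\,q^{1/2},
\]
which follows for prime $q$ from Weil's resolution of the Riemann Hypothesis for curves over finite fields and extends to arbitrary $q$ via the multiplicativity of $S$ together with the evaluation at prime powers. I would cite this rather than reprove it. Inserting this bound and using the crude estimate $(m,h,q)^{1/2}\le (h,q)^{1/2}$ gives
\[
|K(r,h)|\le \tau(q)\,q^{1/2}\,(h,q)^{1/2}\sum_{|m|\le q/2}|a_m|.
\]
Finally $\sum_{|m|\le q/2}|a_m|\ll \log q$ by the coefficient bounds above and $\tau(q)\ll q^\varepsilon$, so that $|K(r,h)|\ll q^{1/2+\varepsilon}(h,q)^{1/2}=|r|^{1/2+\varepsilon}(r,h)^{1/2}$, which is the claim.

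The routine part of the argument is the completion step and the elementary estimates for the coefficients $a_m$; the substantive ingredient, and the only genuinely hard part, is the Weil bound for the complete sum $S(a,b;q)$, so the plan hinges on being able to quote it. A minor point to watch is that the crude bound $(m,h,q)^{1/2}\le(h,q)^{1/2}$ already suffices, since the factors $\log q$ and $\tau(q)$ are both absorbed into $q^\varepsilon$; grouping $m$ according to $d=(m,h,q)$ would yield the same final exponent and is unnecessary here.
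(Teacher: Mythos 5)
Your proof is correct and follows the same route as the paper: the paper disposes of this lemma by citing Weil's bound via (Iwaniec--Kowalski, Ch.~11, Corollary 11.12), and your argument simply writes out the standard completion-to-complete-Kloosterman-sums derivation that this citation encapsulates, with the Weil bound for complete sums as the quoted input in both cases. All the steps (reduction to $r>0$, splitting into intervals of length $\le q$, the Fourier coefficient bounds $|a_0|\ll 1$, $|a_m|\ll 1/|m|$, and the crude estimate $(m,h,q)\le(h,q)$) are sound.
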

\begin{proof}
Follows easily from  A. Weil's estimate for the Kloosterman sum.
See (\cite{Iwaniec}, Ch. 11, Corollary 11.12).
\end{proof}

\begin{lemma}\label{Bijection}
Let $n\geq3$ . There exists a bijective function from the solution set of the equation
\begin{equation}\label{Equation}
x^2+xy+y^2=n\,, \quad (x,y)=1\,, \quad x, y \in\mathbb{N}
\end{equation}
to the incongruent solutions modulo $n$ of the congruence
\begin{equation}\label{Congruence}
z^2+z+1\equiv 0\,(n)\,.
\end{equation}
\end{lemma}
\begin{proof}
Let $F$ denote the set of ordered pairs $(x,y)$ satisfying \eqref{Equation}
and $E$ denote the set of solutions of the congruence \eqref{Congruence}.
We consider each residue class modulo $n$ with representatives satisfying \eqref{Congruence}
as one solution of \eqref{Congruence}.

Let $(x,y)\in F$. From \eqref{Equation}  it follows that $(n, y) = 1$.
Therefore there exists a unique residue class $z$ modulo $n$ such that
\begin{equation}\label{zyx}
zy\equiv  x\,(n)\,.
\end{equation}
For this class we have
\begin{equation*}
(z^2+z+1)y^2\equiv(zy)^2+(zy)y+y^2\equiv x^2+xy+y^2\equiv0 \,(n)\,.
\end{equation*}
From the last congruence and $(n, y) = 1$ we obtain
\begin{equation*}
z^2+z+1\equiv 0\,(n)
\end{equation*}
which means that $z\in E$.
We define the map
\begin{equation}\label{map}
\beta : F\rightarrow E
\end{equation}
that associates to each pair  $(x,y)\in F$ the  residue class $z=x\overline{y}_n$ satisfying \eqref{zyx}.\\
We will first prove that the map \eqref{map} is a injection.
Let  $(x,y),\,(x',y')\in F$ that is
\begin{equation}\label{System1}
\left|\begin{array}{cc}
x^2+xy+y^2=n\;\;\,\\
x'^2+x'y'+y'^2=n \end{array}\right.\,,
\end{equation}
\begin{equation}\label{coprimexyx'y'}
(x,y)=(x',y')=1
\end{equation}
and
\begin{equation}\label{xyx'y'}
(x,y)\neq(x',y')\,.
\end{equation}
Assume that
\begin{equation}\label{betaxyx'y'}
\beta(x,y)=\beta(x',y')\,.
\end{equation}
Hence there exists $z\in E$ such that
\begin{equation}\label{System2}
\left|\begin{array}{cc}
zy\equiv  x\,(n)\quad\\
zy'\equiv  x'\,(n)\;\;
\end{array}\right..
\end{equation}
The system \eqref{System2} implies
\begin{equation}\label{xy'x'y}
xy'-x'y\equiv 0\,(n)\,.
\end{equation}
By \eqref{System1} we deduce
\begin{equation*}
0<x, x', y, y'<\sqrt{n}\,.
\end{equation*}
Hence
\begin{equation}\label{-nxy'x'yn}
-n<xy'-x'y<n\,.
\end{equation}
Now \eqref{xy'x'y} and \eqref{-nxy'x'yn} lead to
\begin{equation*}
xy'-x'y=0
\end{equation*}
which together with \eqref{coprimexyx'y'} gives us
\begin{equation}\label{xx'yy'}
x=x'\,, \quad y=y'\,.
\end{equation}
From \eqref{xyx'y'} and \eqref{xx'yy'} we get a contradiction.
Therefore the assumption \eqref{betaxyx'y'} is not true.
This proves the injectivity of $\beta$.

It remains to show that the map \eqref{map} is a surjection. Let $z\in E$.
From Dirichlet's approximation theorem it follows that there exist integers $a$ and $q$ such that
\begin{equation}\label{Dirichlet}
\left|\frac{z}{n}-\frac{a}{q}\right|<\frac{1}{q\sqrt{n}}\,,
  \quad\quad 1\leq q\leq \sqrt{n},\quad\quad (a,\,q)=1\,.
\end{equation}
Put
\begin{equation}\label{r}
r=zq-an.
\end{equation}
Thus
\begin{equation}\label{r^2+2q^2}
r^2+rq+q^2=z^2q^2-2zqan+a^2n^2+zq^2-anq+q^2\equiv(z^2+z+1)q^2\,(n)\,.
\end{equation}
From \eqref{Congruence} and \eqref{r^2+2q^2} we deduce
\begin{equation}\label{r^2+2q^2modn}
r^2+rq+q^2\equiv0\,(n)\,.
\end{equation}
By \eqref{Dirichlet} and \eqref{r} it follows
\begin{equation}\label{rest}
|r|<\sqrt{n}\,.
\end{equation}
Using \eqref{Dirichlet} and \eqref{rest} we get
\begin{equation}\label{r^2+2q^2est}
0<r^2+rq+q^2<3n\,.
\end{equation}
Taking into account \eqref{r^2+2q^2modn} and \eqref{r^2+2q^2est} we conclude that
$r^2+rq+q^2=n$ or $r^2+rq+q^2=2n$.

We consider two cases.

\smallskip

\textbf{Case 1}
\begin{equation}\label{Case1}
r^2+rq+q^2=n\,.
\end{equation}
From \eqref{r} and \eqref{Case1} we obtain
\begin{equation*}
n=(zq-an)^2+(zq-an)q+q^2=(z^2+z+1)q^2-ran-zqan-qan
\end{equation*}
and therefore
\begin{equation}\label{ra+1}
ra+1=kq\,,
\end{equation}
where
\begin{equation}\label{kznqaz}
k=\frac{z^2+z+1}{n}q-az-a\,.
\end{equation}
By \eqref{Congruence} and \eqref{kznqaz} it follows that  $k\in\mathbb{Z}$ and bearing in mind \eqref{ra+1} establish that
\begin{equation}\label{rq1}
(r,q)=1\,.
\end{equation}
Using \eqref{Case1}, \eqref{rq1} and $n\geq3$ we deduce $r\neq0$.

\smallskip

\textbf{Case 1.1}
\begin{equation*}
r>0\,.
\end{equation*}
Put
\begin{equation}\label{xy1}
x=r\,, \quad y=q\,.
\end{equation}
Now \eqref{Case1}, \eqref{rq1} and \eqref{xy1} imply $(x,y)\in F$.
Also \eqref{r} and  \eqref{xy1} yield  \eqref{zyx}. Therefore $\beta(x,y)=z$.

\smallskip

\textbf{Case 1.2}
\begin{equation*}
r<0\,.
\end{equation*}

\textbf{Case 1.2.1}
\begin{equation*}
r+q>0\,.
\end{equation*}
We use that \eqref{Case1} is equivalent to
\begin{equation}\label{r+q>0}
(r+q)^2-(r+q)r+r^2=n\,.
\end{equation}
Set
\begin{equation}\label{xr+qr1}
x=r+q\,, \quad y=-r\,.
\end{equation}
From \eqref{rq1} we have
\begin{equation}\label{r+q,r1}
(r+q, r)=1\,.
\end{equation}
Now \eqref{r+q>0}, \eqref{xr+qr1} and \eqref{r+q,r1} give us $(x,y)\in F$.
As well \eqref{r} and  \eqref{xr+qr1} lead to \eqref{zyx}. Thus $\beta(x,y)=z$.

\smallskip

\textbf{Case 1.2.2}
\begin{equation*}
r+q<0\,.
\end{equation*}
We use that \eqref{Case1} is equivalent to
\begin{equation}\label{r+q<0}
q^2-q(r+q)+(r+q)^2=n\,.
\end{equation}
Put
\begin{equation}\label{xr+qr2}
x=q\,, \quad y=-r-q\,.
\end{equation}
By \eqref{rq1} we have
\begin{equation}\label{r+q,r2}
(q, r+q)=1\,.
\end{equation}
Now \eqref{r+q<0}, \eqref{xr+qr2} and \eqref{r+q,r2} yield $(x,y)\in F$.
As well \eqref{r} and  \eqref{xr+qr2} lead to \eqref{zyx}. Thus $\beta(x,y)=z$.

\smallskip

\textbf{Case 2}
\begin{equation}\label{Case2}
r^2+rq+q^2=2n\,.
\end{equation}
From \eqref{Case2} it follows that
\begin{equation}\label{rqeven}
r=2r_0\,, \quad q=2q_0\,.
\end{equation}
Now \eqref{Case2} and \eqref{rqeven} imply
\begin{equation}\label{r0q0}
2(r_0^2+r_0q_0+q_0^2)=n\,,
\end{equation}
that is, $n$ is an even number that contradicts \eqref{Congruence}.
Consequently this case is impossible.

The lemma is proved.
\end{proof}

\section{Outline of the proof }
\indent

The preliminary manoeuvres for this problem are straightforward.
Using \eqref{GammaX} and the well-known identity
\begin{equation*}
\mu^2(n)=\sum_{d^2|n}\mu(d)
\end{equation*}
we write
\begin{equation}\label{GammaXdecomp}
\Gamma(X)=\sum\limits_{1\leq d\leq\sqrt{X^2+X+1}}\mu(d)\sum\limits_{1\leq n\leq X\atop{n^2+n+1\equiv 0\,(d^2)}}1
=\Gamma_1(X)+\Gamma_2(X)\,,
\end{equation}
where
\begin{align}
\label{Gamma1}
&\Gamma_1(X)=\sum\limits_{1\leq d\leq z}\mu(d)\Sigma\big(X, d^2\big)\,,\\
\label{Gamma2}
&\Gamma_2(X)=\sum\limits_{d>z}\mu(d)\Sigma\big(X, d^2\big)\,,\\
\label{Sigma}
&\Sigma\big(X, d^2\big)=\sum\limits_{1\leq n\leq X\atop{n^2+n+1\equiv 0\,(d^2)}}1\,,\\
\label{z}
&\sqrt{X}\leq z< X\,,
\end{align}
where $z$ is to be chosen later.

We shall estimate $\Gamma_1$ and $\Gamma_2$, respectively,
in the sections \ref{SectionGamma1} and \ref{SectionGamma2}.
In section \ref{Sectionfinal} we shall finalize the proof of Theorem \ref{Theorem1}.

\section{Estimation of $\mathbf{\Gamma_1(X)}$}\label{SectionGamma1}
\indent

Hencefort we assume that $q=d^2$, where $d$  is square-free and $d\leq z$.

Denote
\begin{equation}\label{Omega}
\Omega(X, q, n)=\sum\limits_{m\leq X\atop{m\equiv n\,(q)}}1\,.
\end{equation}
Obviously
\begin{equation}\label{Omegaest}
\Omega(X, q, n)=\frac{X}{q}+\mathcal{O}(1)\,.
\end{equation}
Using \eqref{Sigma} and \eqref{Omega} we obtain upon partitioning
the sum \eqref{Sigma} into residue classes modulo $q$
\begin{equation}\label{SigmaOmega}
\Sigma\big(X, q\big)=\sum\limits_{1\leq n\leq q\atop{n^2+n+1\equiv 0\,(q)}}\Omega(X, q, n)\,.
\end{equation}
By  \eqref{lambdaq1q2}, \eqref{SigmaOmega} and \eqref{Omegaest} we obtain
\begin{equation}\label{Sigmaest1}
\Sigma\big(X, q\big)=X\frac{\lambda(q)}{q}+\mathcal{O}\big(\lambda(q)\big)\,.
\end{equation}
Bearing in mind \eqref{lambdaq1q2} and that the number of solutions of the congruence
\begin{equation*}
n^2+n+1\equiv a\,(q)
\end{equation*}
is less than or equal to $\tau(q)$ we find
\begin{equation}\label{lambdaq1q2est}
\lambda(q)\ll\tau(q)\,.
\end{equation}
Now \eqref{Sigmaest1}, \eqref{lambdaq1q2est} and the inequalities
\begin{equation*}
\tau(q)\ll (q)^\varepsilon\ll X^\varepsilon
\end{equation*}
yield
\begin{equation}\label{Sigmaest2}
\Sigma\big(X, q\big)=X\frac{\lambda(q)}{q}+\mathcal{O}\big(X^\varepsilon\big)\,.
\end{equation}
Taking into account \eqref{Gamma1}, \eqref{z} and \eqref{Sigmaest2} we deduce
\begin{align}\label{GammaX1est1}
\Gamma_1(X)&=X\sum\limits_{1\leq d\leq z}\frac{\mu(d)\lambda(d^2)}{d^2}+\mathcal{O}\big(zX^\varepsilon \big)\nonumber\\
&=\sigma X-X \sum\limits_{d>z}\frac{\mu(d)\lambda(d^2)}{d^2}+\mathcal{O}\big(zX^\varepsilon\big)\,,
\end{align}
where
\begin{equation}\label{sigmasum}
\sigma=\sum\limits_{d=1}^\infty\frac{\mu(d)\lambda(d^2)}{d^2}\,.
\end{equation}
Using \eqref{lambdaq1q2est} we get
\begin{equation}\label{d1d2>est}
\sum\limits_{d>z}\frac{\mu(d)\lambda(d^2)}{d^2}\ll\sum\limits_{d>z}\frac{d^{\varepsilon}}{d^2}\ll z^{\varepsilon-1}\,.
\end{equation}
It remains to see that the product  \eqref{sigmaproduct} and the sum \eqref{sigmasum} coincide.
From the definition \eqref{lambdaq1q2} it follows that the function $\lambda(q)$ is multiplicative, i.e. if
\begin{equation*}
(q_1, q_2)=1
\end{equation*}
then
\begin{equation*}
\lambda(q_1q_2)=\lambda(q_1)\lambda(q_2)\,.
\end{equation*}
Obviously the function
\begin{equation*}
\frac{\mu(d)\lambda(d^2)}{d^2}
\end{equation*}
is multiplicative  and the series
\begin{equation*}
\sum\limits_{d=1}^\infty\frac{\mu(d)\lambda(d^2)}{d^2}
\end{equation*}
is absolutely convergent.
Applying the Euler product we get
\begin{equation}\label{sigmasumest}
\sigma=\sum\limits_{d=1}^\infty\frac{\mu(d)\lambda(d^2)}{d^2}=
\prod_p\left(1-\frac{\lambda(p^2)}{p^2}\right)\,.
\end{equation}
Bearing in mind \eqref{z}, \eqref{GammaX1est1}, \eqref{d1d2>est} and \eqref{sigmasumest} we obtain
\begin{equation}\label{GammaX1est2}
\Gamma_1(X)=\sigma X+\mathcal{O}\big(zX^\varepsilon\big)\,,
\end{equation}
where $\sigma$ is given by the product \eqref{sigmaproduct}.

\section{Estimation of $\mathbf{\Gamma_2(X)}$}\label{SectionGamma2}
\indent

Using \eqref{Gamma2}, \eqref{Sigma} and splitting the range of $d$ into dyadic subintervals of the form $D\leq d<2D$,
we write
\begin{equation}\label{Gamma2est1}
\Gamma_2(X)\ll(\log X)\Sigma_0\,,
\end{equation}
where
\begin{equation}\label{Sigma0}
\Sigma_0=\sum\limits_{n\leq X}\sum\limits_{D\leq d<2D\atop{n^2+n+1\equiv 0\,(d^2)}}1\,,
\end{equation}
\begin{equation}\label{zDX}
\frac{z}{2}\leq D\leq\sqrt{X^2+X+1}\,.
\end{equation}
Define
\begin{align}
\label{N1set}
&\mathcal{N}(d)=\{n\in\mathbb{N }\; : \; 1\leq n \leq d, \;\;  n^2+n+1\equiv 0\,(d) \}\,,\\
\label{N'1set}
&\mathcal{N}'(d)=\{n\in\mathbb{N }\; : \; 1\leq n \leq d^2, \;\;  n^2+n+1\equiv 0\,(d^2) \}\,.
\end{align}
By \eqref{Sigma0} and \eqref{N'1set} we obtain
\begin{align}\label{Sigma0est}
\Sigma_0&=\sum\limits_{D\leq d<2D}\sum\limits_{n\in \mathcal{N}'(d)}
\sum\limits_{m\leq X\atop{m\equiv n\,(d^2)}}1
=\sum\limits_{D\leq d<2D}\sum\limits_{n\in \mathcal{N}'(d)}
\Bigg(\left[\frac{X-n}{d^2}\right]-\left[\frac{-n}{d^2}\right]\Bigg) \nonumber\\
&=\sum\limits_{D\leq d<2D}\sum\limits_{n\in \mathcal{N}'(d)}
\Bigg(\frac{X}{d^2}+\psi\left(\frac{-n}{d^2}\right)-\psi\left(\frac{X-n}{d^2}\right)\Bigg)\nonumber\\
&\ll X^{1+\varepsilon}D^{-1}+|\Sigma_1|+|\Sigma_2|\,,
\end{align}
where
\begin{align}
\label{Sigma1}
&\Sigma_1=\sum\limits_{D\leq d<2D}\sum\limits_{n\in \mathcal{N}'(d)}\psi\left(\frac{-n}{d^2}\right)\,,\\
\label{Sigma2}
&\Sigma_2=\sum\limits_{D\leq d<2D}\sum\limits_{n\in \mathcal{N}'(d)}\psi\left(\frac{X-n}{d^2}\right)\,.
\end{align}
Firstly we consider the sum $\Sigma_1$.
We note that the sum over $n$ in  \eqref{Sigma1} does not contain terms with $n=\frac{d^2}{2}$ and $n=d^2$.
Moreover for any $n$ satisfying the congruence $n^2+n+1\equiv 0\,(d^2)$  and such that
$1\leq n<\frac{d^2}{2}$ the number $d^2-n-1$ satisfies the same congruence.
Taking into account these arguments we get
\begin{align}\label{Sigma1est}
\Sigma_1&=\sum\limits_{D\leq d<2D}\sum\limits_{1\leq n<d^2/2\atop{n^2+n+1\equiv 0\,(d^2)}}\psi\left(\frac{-n}{d^2}\right)
+\sum\limits_{D\leq d<2D}\sum\limits_{d^2/2< n<d^2\atop{n^2+n+1\equiv 0\,(d^2)}}\psi\left(\frac{-n}{d^2}\right)\nonumber\\
&=\sum\limits_{D\leq d<2D}\sum\limits_{1\leq n<d^2/2\atop{n^2+n+1\equiv 0\,(d^2)}}\psi\left(\frac{-n}{d^2}\right)
+\sum\limits_{D\leq d<2D}\sum\limits_{1\leq m<d^2/2-1\atop{(d^2-m-1)^2+d^2-m\equiv 0\,(d^2)}}\psi\left(\frac{-(d^2-m-1)}{d^2}\right)\nonumber\\
&=\sum\limits_{D\leq d<2D}\sum\limits_{1\leq n<d^2/2\atop{n^2+n+1\equiv 0\,(d^2)}}\psi\left(\frac{-n}{d^2}\right)
+\sum\limits_{D\leq d<2D}\sum\limits_{1\leq m<d^2/2-1\atop{m^2+m+1\equiv 0\,(d^2)}}\psi\left(\frac{m+1}{d^2}\right)\nonumber\\
&=\sum\limits_{D\leq d<2D}\sum\limits_{1\leq n<d^2/2-1\atop{n^2+n+1\equiv 0\,(d^2)}}
\Bigg(\psi\left(\frac{-n}{d^2}\right)+\psi\left(\frac{n+1}{d^2}\right)\Bigg)
+\sum\limits_{D\leq d<2D}\sum\limits_{d^2/2-1\leq n<d^2/2\atop{n^2+n+1\equiv 0\,(d^2)}}\psi\left(\frac{-n}{d^2}\right)\nonumber\\
&=\sum\limits_{D\leq d<2D}\sum\limits_{1\leq n<d^2/2-1\atop{n^2+n+1\equiv 0\,(d^2)}}\frac{1}{d^2}
+\sum\limits_{D\leq d<2D}\sum\limits_{d^2/2-1\leq n<d^2/2\atop{n^2+n+1\equiv 0\,(d^2)}}\left(\frac{1}{2}-\frac{n}{d^2}\right)\nonumber\\
&\ll X^\varepsilon D^{-1}+\sum\limits_{D\leq d<2D}\sum\limits_{d^2/2-1\leq n<d^2/2\atop{n^2+n+1\equiv 0\,(d^2)}}\frac{1}{d^2}\nonumber\\
&\ll X^\varepsilon D^{-1}\,.
\end{align}
Next we consider the sum $\Sigma_2$ denoted by \eqref{Sigma2}.
If $D\leq X^{\frac{1}{2}}$  then trivial estimation gives us
\begin{equation}\label{Sigma2est1}
\Sigma_2\ll\sum\limits_{D\leq d<2D}d^\varepsilon\ll  X^{\frac{1}{2}+\varepsilon}\,.
\end{equation}
Assume
\begin{equation}\label{D>}
D> X^{\frac{1}{2}}\,.
\end{equation}
First we notice that all summands in the sum  \eqref{Sigma2} for which $3\mid d$  are equal to zero
because the congruences
\begin{equation}\label{congruence}
n^2+n+1\equiv 0\,(9)
\end{equation}
has no solution. For this reason, in the estimation of \eqref{Sigma2} we will consider that $3\nmid d$.\\
Let $f(x) = a_nx^n + a_{n-1}x^{n-1} + \cdots + a_0$ be a polynomial with integral coefficients
and $r_1, \ldots, r_k$ be all solutions of the congruence
\begin{equation}\label{congruence1}
f(x)\equiv0\,(p^{l-1})\,.
\end{equation}
From the theory of the congruences we know that when $p\nmid f'(r_i)$ for $i=1,\ldots, k$ then
the number of solutions of the congruence
\begin{equation}\label{congruence2}
f(x)\equiv0\,(p^l)\,.
\end{equation}
is also equal to $k$, that is, congruences \eqref{congruence1} and \eqref{congruence2} have an equal number of solutions.
Given the above considerations, we conclude that the congruences
\begin{equation}\label{congruence3}
n^2+n+1\equiv 0\,(d^2)
\end{equation}
and
\begin{equation}\label{congruence4}
n^2+n+1\equiv 0\,(d)
\end{equation}
will have an equal number of solutions if we prove that for arbitrary prime factor $p$ of $d$ and
arbitrary solution $r$ of \eqref{congruence4} we have that
\begin{equation*}
p\nmid 2r+1\,.
\end{equation*}
We assume the opposite. Then
\begin{equation*}
p\mid r^2+r+1 \quad \mbox{ and } \quad p\mid2r+1
\end{equation*}
and therefore
\begin{equation*}
p\mid r-1
\end{equation*}
that is $r=ph+1$, where $h\in\mathbb{Z}$. Now
\begin{equation*}
r^2+r+1\equiv 0\,(p)
\end{equation*}
leads to
\begin{equation*}
(ph+1)^2+ph+1+1=p^2h^2+3ph+3\equiv 0\,(p)
\end{equation*}
which means $p=3$. But we have already excluded the case when 3 is a prime factor of $d$.
Consequently congruences \eqref{congruence3} and \eqref{congruence4} have an equal number of solutions.
Moreover for any $n$ satisfying the congruence \eqref{congruence4}  and such that
$1\leq n<\frac{d^2}{2}$ the number $d^2-n-1$ satisfies the same congruence.
As we mentioned the same is true for the congruence \eqref{congruence3}.
We also note that if $n=d^2-n-1$ then $d^2 \mid3$ which is impossible
and if $n=d-n-1$ then $d=3$ which we excluded  as an possibility.
Using this fact and notations \eqref{N1set}, \eqref{N'1set} we denote
\begin{equation}\label{k}
k=\#\mathcal{N}(d)=\#\mathcal{N}'(d)\,,
\end{equation}
\begin{equation}\label{solutions}
n_1, \ldots, n_k\in\mathcal{N}(d)\,,\quad n'_1, \ldots, n'_k\in\mathcal{N}'(d)\,.
\end{equation}
Now  \eqref{N1set}, \eqref{N'1set}, \eqref{D>}, \eqref{k} and \eqref{solutions}  imply
\begin{align}\label{Prehod}
&\sum\limits_{n\in \mathcal{N}'(d)}\psi\left(\frac{X-n}{d^2}\right)
=\sum\limits_{n\in \mathcal{N}'(d)}\left(\frac{X-n}{d^2}-\frac{1}{2}\right)\nonumber\\
&=\sum\limits_{n\in \mathcal{N}'(d)}\left(\frac{X}{d^2}-\frac{1}{2}\right)
-\frac{n'_1+\cdots+n'_{k/2}+(d^2-n'_1-1)+\cdots+(d^2-n'_{k/2}-1)}{d^2}\nonumber\\
&=\sum\limits_{n\in \mathcal{N}(d)}\left(\frac{X}{d^2}-\frac{1}{2}\right)
-\frac{k(d^2-1)}{2d^2}\nonumber\\
&=\sum\limits_{n\in \mathcal{N}(d)}\left(\frac{X}{d^2}-\frac{1}{2}\right)-
\frac{n_1+\cdots+n_{k/2}+(d-n_1-1)+\cdots+(d-n_{k/2}-1)}{d}\left(1+\frac{1}{d}\right)\nonumber\\
&=\sum\limits_{n\in \mathcal{N}(d)}\left(\frac{X}{d^2}-\frac{1}{2}\right)
-\left(1+\frac{1}{d}\right)\sum\limits_{n\in \mathcal{N}(d)}\frac{n}{d}\nonumber\\
&=\sum\limits_{n\in \mathcal{N}(d)}\left(\frac{X}{d^2}-\frac{\sqrt{X}}{d}-\frac{n}{d^2}\right)
+\sum\limits_{n\in \mathcal{N}(d)}\left(\frac{\sqrt{X}-n}{d}-\frac{1}{2}\right)\nonumber\\
&=\sum\limits_{n\in \mathcal{N}(d)}\left(\frac{X}{d^2}-\frac{\sqrt{X}}{d}-\frac{n}{d^2}\right)
+\sum\limits_{n\in \mathcal{N}(d)}\psi\left(\frac{\sqrt{X}-n}{d}\right).
\end{align}
By \eqref{Sigma2}, \eqref{D>}  and \eqref{Prehod} we obtain
\begin{equation}\label{Sigma2est2}
\Sigma_2\ll  X^{\frac{1}{2}+\varepsilon}+|\Sigma_3|\,,
\end{equation}
where
\begin{equation}\label{Sigma3}
\Sigma_3=\sum\limits_{D\leq d<2D}\sum\limits_{n\in \mathcal{N}(d)}\psi\left(\frac{\sqrt{X}-n}{d}\right)\,.
\end{equation}
Using  \eqref{Sigma3} and Lemma \ref{expansion} with
\begin{equation}\label{M}
M=X^{\frac{1}{2}}
\end{equation}
we deduce
\begin{align}\label{Sigma3est1}
\Sigma_3&=\sum\limits_{D\leq d<2D}\sum\limits_{n\in \mathcal{N}(d)}
\Bigg(-\sum\limits_{1\leq|m|\leq M}\frac{e\left(m\left(\frac{\sqrt{X}-n}{d}\right)\right)}{2\pi i m}
+\mathcal{O}\left(f_{M}\left(\frac{\sqrt{X}-n}{d}\right)\right)\Bigg)\nonumber\\
&=\Sigma_4+\Sigma_5\,,
\end{align}
where
\begin{align}
\label{Sigma4}
&\Sigma_4=\sum\limits_{1\leq|m|\leq M}\frac{\Theta_m}{2\pi i m}\,,\\
\label{Thetam}
&\Theta_m=\sum\limits_{D\leq d<2D}e\left(\frac{\sqrt{X}m}{d}\right)
\sum\limits_{n\in \mathcal{N}(d)}e\left(-\frac{nm}{d}\right)\,,\\
\label{Sigma5}
&\Sigma_5=\sum\limits_{D\leq d<2D}\sum\limits_{n\in \mathcal{N}(d)}f_{M}\left(\frac{\sqrt{X}-n}{d}\right)\,.
\end{align}
Now \eqref{Thetam}, \eqref{Sigma5} and Lemma \ref{expansion} yield
\begin{align}\label{Sigma5est}
\Sigma_5&=\sum\limits_{D\leq d<2D}\sum\limits_{n\in \mathcal{N}(d)}
\sum\limits_{m=-\infty}^{+\infty}b_{M}(m)e\left(\frac{\sqrt{X}-n}{d}m\right)
=\sum\limits_{m=-\infty}^{+\infty}b_{M}(m)\Theta_m\nonumber\\
&\ll\frac{\log M}{M}|\Theta_0|+\frac{\log M}{M}\sum\limits_{1\leq|m|\leq M^{1+\varepsilon}}|\Theta_m|
+\sum\limits_{|m|> M^{1+\varepsilon}}|b_{M}(m)||\Theta_m|\nonumber\\
&\ll\frac{\log M}{M}D^{1+\varepsilon}+\frac{\log M}{M}\sum\limits_{1\leq m\leq M^{1+\varepsilon}}|\Theta_m|
+D^{1+\varepsilon}\sum\limits_{|m|> M^{1+\varepsilon}}|b_{M}(m)|\nonumber\\
&\ll\frac{\log M}{M}D^{1+\varepsilon}+\frac{\log M}{M}\sum\limits_{1\leq m\leq M^{1+\varepsilon}}|\Theta_m|\,.
\end{align}
From \eqref{Sigma3est1}, \eqref{Sigma4} and \eqref{Sigma5est} it follows
\begin{equation}\label{Sigma3est2}
\Sigma_3\ll X^{\varepsilon}\left(\frac{D}{M}+\sum\limits_{1\leq m\leq M^{1+\varepsilon}}\frac{|\Theta_m|}{m}\right).
\end{equation}
Define
\begin{equation}\label{Fdset}
\mathcal{F}(d)=\{(u,v)\; : \; u^2+uv+v^2=d, \;\;  (u,v)=1, \;\; u, v\in\mathbb{N} \}.
\end{equation}
According to Lemma \ref{Bijection}  there exists a bijection
\begin{equation*}
\beta : \mathcal{F}(d)\rightarrow \mathcal{N}(d)
\end{equation*}
from $\mathcal{F}(d)$ to  $\mathcal{N}(d)$ defined by \eqref{N1set}
that associates to each couple $(u,v)\in \mathcal{F}(d)$ the element $n\in\mathcal{N}(d)$  satisfying
\begin{equation}\label{nvu}
nv\equiv  u\,(d)\,.
\end{equation}
Now  \eqref{nvu} implies
\begin{equation*}
n_{u,v}\equiv  u\overline{v}_{d}\,(d)
\end{equation*}
and therefore
\begin{equation}\label{nuvd}
\frac{n_{u,v}}{d}\equiv  u\frac{\overline{v}_{u^2+uv+v^2}}{u^2+uv+v^2}\,\,(1) \,.
\end{equation}
Bearing in mind \eqref{nuvd} and  Lemma \ref{Wellknown}  we find
\begin{align}
\label{nuvd1}
&\frac{n_{u,v}}{d}\equiv \frac{u}{v(u^2+uv+v^2)}-\frac{\overline{u}_{v}}{v}\,\,(1)\,,\\
\label{nuvd2}
&\frac{n_{u,v}}{d}\equiv -\frac{u+v}{u(u^2+uv+v^2)}+\frac{\overline{v}_{u}}{u}\,\,(1)\,.
\end{align} From \eqref{Thetam}, \eqref{Fdset}, \eqref{nuvd1} and \eqref{nuvd2} we deduce
\begin{align}\label{Thetamest1}
\Theta_m&=\sum\limits_{D\leq d<2D}e\left(\frac{m\sqrt{X}}{d}\right)
\sum\limits_{(u,v)\in\mathcal{F}(d)}e\left(-\frac{n_{u,v}}{d}m\right)\nonumber\\
&=\sum\limits_{D\leq d<2D}e\left(\frac{m\sqrt{X}}{d}\right)
\sum\limits_{(u,v)\in\mathcal{F}(d)\atop{0<u<v}}e\left(-\frac{mu}{v(u^2+uv+v^2)}+\frac{m\overline{u}_{v}}{v}\right)\nonumber\\
&+\sum\limits_{D\leq d<2D}e\left(\frac{m\sqrt{X}}{d}\right)
\sum\limits_{(u,v)\in\mathcal{F}(d)\atop{0<v<u}}e\left(\frac{m(u+v)}{u(u^2+uv+v^2)}-\frac{m\overline{v}_{u}}{u}\right)\nonumber\\
&=\sum\limits_{D\leq u^2+uv+v^2<2D\atop{0<u<v\atop{(u, v)=1}}}
e\left(\frac{m\sqrt{X}}{u^2+uv+v^2}-\frac{mu}{v(u^2+uv+v^2)}+\frac{m\overline{u}_{v}}{v}\right)\nonumber\\
&+\sum\limits_{D\leq u^2+uv+v^2<2D\atop{0<v<u\atop{(u, v)=1}}}
e\left(\frac{m\sqrt{X}}{u^2+uv+v^2}+\frac{m(u+v)}{u(u^2+uv+v^2)}-\frac{m\overline{v}_{u}}{u}\right)\nonumber\\
&=\Theta'_m+\Theta''_m\,,
\end{align}
say.
Consider $\Theta'_m$.
Let for any fixed $\sqrt{\frac{D}{3}}\leq v<\sqrt{2D}$ the interval $\big[\eta_1(v), \eta_2(v)\big]$
is a solution with respect to $u$ of the system
\begin{equation}\label{Systemlast}
\left|\begin{array}{ccc}
u^2+uv+v^2<2D\\
u^2+uv+v^2\geq D \;\;\\
0<u<v\quad\quad\quad\;\\
\end{array}\right..
\end{equation}
Denote
\begin{equation}\label{gu}
g(u)=e\left(\frac{m\sqrt{X}}{u^2+uv+v^2}-\frac{mu}{v(u^2+uv+v^2)}\right)\,,
\end{equation}
\begin{equation}\label{Kloostermanvm}
K_{v,m}(t)=\sum\limits_{\eta_1(v)\leq u\leq t\atop{(u, v)=1}}e\left(\frac{m\overline{u}_{v}}{v}\right)\,.
\end{equation}
Using \eqref{Thetamest1} -- \eqref{Kloostermanvm} and Abel's summation formula we obtain
\begin{align}\label{Thetam'est1}
\Theta'_m&=\sum\limits_{\sqrt{\frac{D}{3}}\leq v<\sqrt{2D}}
\sum\limits_{\eta_1(v)\leq u\leq \eta_2(v)\atop{(u, v)=1}}
g(u)e\left(\frac{m\overline{u}_{v}}{v}\right)\nonumber\\
&=\sum\limits_{\sqrt{\frac{D}{3}}\leq v<\sqrt{2D}}\left( g\big(\eta_2(v)\big)K_{v,m}\big(\eta_2(v)\big)
- \int\limits_{\eta_1(v)}^{\eta_2(v)}K_{v,m}(t)\left(\frac{d}{dt}g(t)\right)\,dt \right)\nonumber\\
&\ll\sum\limits_{\sqrt{\frac{D}{3}}\leq v<\sqrt{2D}}\left(1+\frac{m\sqrt{X}}{v^2}\right)
\max_{\eta_1(v)\leq t\leq \eta_2(v)}|K_{v,m}(t)|\,.
\end{align}
We are now in a good position to apply Lemma \ref{Weilsestimate}
because the sum defined by \eqref{Kloostermanvm} is  incomplete Kloosterman sum.
Thus
\begin{equation}\label{Kloostermanvmest}
K_{v,m}(t)\ll v^{\frac{1}{2}+\varepsilon}\,(v,m)^{\frac{1}{2}}\,.
\end{equation}
By \eqref{Thetam'est1} and  \eqref{Kloostermanvmest} we  deduce
\begin{align}\label{Thetam'est2}
\Theta'_m&\ll\sum\limits_{\sqrt{\frac{D}{3}}\leq v<\sqrt{D}}
\left(1+\frac{m\sqrt{X}}{v^2}\right)v^{\frac{1}{2}+\varepsilon}\,(v,m)^{\frac{1}{2}}\nonumber\\
&\ll X^\varepsilon\Big(D^{\frac{1}{4}}+ mX^{\frac{1}{2}}D^{-\frac{3}{4}} \Big)
\sum\limits_{0<v<\sqrt{D}}(v,m)^{\frac{1}{2}}\,.
\end{align}
On the other hand
\begin{equation}\label{sumvm}
\sum\limits_{0<v<\sqrt{D}}(v,m)^{\frac{1}{2}}
\leq\sum\limits_{l|m}l^{\frac{1}{2}}\sum\limits_{v\leq \sqrt{D}\atop{v\equiv0\,(l)}}1
\ll D^{\frac{1}{2}}\sum\limits_{l|m}l^{-\frac{1}{2}}\ll D^{\frac{1}{2}}\tau(m)\ll X^\varepsilon D^{\frac{1}{2}}\,.
\end{equation}
The estimations \eqref{Thetam'est2} and \eqref{sumvm}  yield
\begin{equation}\label{Thetam'est3}
\Theta'_m\ll X^\varepsilon\Big(D^{\frac{3}{4}}+ mX^{\frac{1}{2}}D^{-\frac{1}{4}} \Big)\,.
\end{equation}
Arguing in a similar way for $\Theta''_m$ from \eqref{Thetamest1} we get
\begin{equation}\label{Thetam''est}
\Theta''_m\ll X^\varepsilon\Big(D^{\frac{3}{4}}+ mX^{\frac{1}{2}}D^{-\frac{1}{4}} \Big)\,.
\end{equation}
From \eqref{Thetamest1}, \eqref{Thetam'est3} and \eqref{Thetam''est} it follows
\begin{equation}\label{Thetamest2}
\Theta_m\ll X^\varepsilon\Big(D^{\frac{3}{4}}+ mX^{\frac{1}{2}}D^{-\frac{1}{4}} \Big)\,.
\end{equation}
Now \eqref{Sigma3est2} and \eqref{Thetamest2} give us
\begin{equation}\label{Sigma3est3}
\Sigma_3\ll  X^\varepsilon\Big(DM^{-1}+D^{\frac{3}{4}}+X^{\frac{1}{2}}MD^{-\frac{1}{4}}\Big)\,.
\end{equation}
Bearing in mind \eqref{zDX}, \eqref{M} and \eqref{Sigma3est3} we find
\begin{equation}\label{Sigma3est4}
\Sigma_3\ll  X^{1+\varepsilon}D^{-\frac{1}{4}}\,.
\end{equation}
Using \eqref{zDX}, \eqref{Sigma2est1}, \eqref{Sigma2est2} and \eqref{Sigma3est4} we obtain
\begin{equation}\label{Sigma2est3}
\Sigma_2\ll  X^{1+\varepsilon}D^{-\frac{1}{4}}\,.
\end{equation}
Summarizing \eqref{Gamma2est1}, \eqref{zDX}, \eqref{Sigma0est}, \eqref{Sigma1est} and  \eqref{Sigma2est3} we get
\begin{equation}\label{Gamma2est2}
\Gamma_2(X)\ll  X^{1+\varepsilon}z^{-\frac{1}{4}}\,.
\end{equation}

\section{The end of the proof}\label{Sectionfinal}
\indent

Taking int account \eqref{GammaXdecomp}, \eqref{GammaX1est2}, \eqref{Gamma2est2}
and choosing $z=X^{\frac{4}{5}}$ we establish the asymptotic formula
\eqref{asymptoticformula1}. This completes the proof of Theorem \ref{Theorem1}.

\vskip20pt
\footnotesize
\begin{flushleft}
S. I. Dimitrov\\
Faculty of Applied Mathematics and Informatics\\
Technical University of Sofia \\
Blvd. St.Kliment Ohridski 8, \\
Sofia 1756, Bulgaria\\
e-mail: sdimitrov@tu-sofia.bg\\
\end{flushleft}

\end{document}